\newtheorem{theorem}{Theorem}[section]
\newtheorem{corollary}[theorem]{Corollary}
\numberwithin{equation}{section}
\theoremstyle{definition}
\newtheorem{definition}[theorem]{Defintion}
\newtheorem{remark}[theorem]{Remark}
\DeclareMathOperator{\dom}{dom}%
\newcommand{\vstr}[1][3]{\rule{0ex}{#1ex}}
\newcommand{\negsp}[1][20]{\mspace{-#1mu}}
\newcommand{\evald}[2][]{\ensuremath{\negsp[4]\left.\vstr[2.0] \right|_{#2}^{#1}}} 
\def\Tri#1#2#3#4{{ 
\put(#1,#2){\line(3,5){#3}} \put(#1,#2){\line(3,0){#4}} \count223=#1 \advance\count223 by #4
\put(\count223,#2){\line(-3,5){#3}}}}
\def\Spic#1#2#3#4{{
\count205=#2\count206=#2\count207=#2\count208=#2\count209=#2
\count210=#1\count211=#3\count214=#3\count212=#4 \divide\count210 by 2 {\ifnum\count210>0
\Spic{\count210}{#2}{#3}{#4} \multiply\count207 by 3 \multiply\count208 by 6 \multiply\count209 by
5 \multiply\count207 by \count210 \multiply\count208 by \count210 \multiply\count209 by \count210
{\advance\count211 by \count207 \advance\count212 by \count209
\Spic{\count210}{#2}{\count211}{\count212}} {\advance\count214 by \count208
\Spic{\count210}{#2}{\count214}{#4}} \else \multiply\count205 by 3 \multiply\count206 by 6
\Tri{#3}{#4}{\count205}{\count206} \fi }}}
\begin{document}

\title{Gradients of Laplacian Eigenfunctions on the Sierpinski Gasket \footnote{MSC Primary 28A80; Secondary 33E30}}
\author{Jessica L. DeGrado}
\address{\noindent Department of Mathematics\\ Cornell University\\ Ithaca, NY 14853-4201\\USA.}
\email{jld69@cornell.edu}
\thanks{Research supported by the National Science Foundation through the Research
Experiences for Undergraduates (REU) Program at Cornell University.}
\author{Luke G. Rogers}
\address{\noindent Department of Mathematics\\University of Connecticut\\Storrs CT 06269-3009\\USA.}
\email{rogers@math.uconn.edu}
\author{Robert S. Strichartz}
\address{\noindent Department of Mathematics\\ Cornell University\\ Ithaca, NY 14853-4201\\USA.}
\email{str@math.cornell.edu}
\thanks{Research supported in part by the National Science Foundation, Grant
DMS-0652440.}

\date{\today}

\begin{abstract}
We use spectral decimation to provide formulae for computing the harmonic gradients of Laplacian
eigenfunctions on the Sierpinski Gasket.  These formulae are given in terms of special functions
that are defined as infinite products.
\end{abstract}

\maketitle


\section{Introduction}

There are few functions more ubiquitous in Euclidean analysis than the sine, cosine and
exponential, which are the eigenfunctions of the Laplacian on an interval in $\mathbb{R}$. In the
theory of analysis on fractals, the Laplacian eigenfunctions arguably have an even more prominent
role, as the Laplacian is the fundamental differential operator on which the analysis is based.
Despite this, there are a number of interesting open questions about the structure of such
eigenfunctions. In this paper we consider the local behavior of Laplacian eigenfunctions on the
Sierpinski Gasket ($SG$), in terms of the harmonic tangents and gradients introduced by Teplyaev
in~\cite{MR1761365}. Using the spectral decimation method of Fukashima and Shima~\cite{MR1245223}
(see also Chapter 3 of~\cite{Strichartzbook}) we give infinite product formulae for the tangents at
boundary points, and use them to describe the one-sided tangents at junction points. These results
may be seen as a Sierpinski Gasket version of the well known formulae for the derivatives of the
sine and cosine functions on an interval, though the precise analogue on $[0,1]$ is more
complicated (see Equations~\ref{intervalcaseone}-\ref{intervalcasethree}).

The Sierpinski Gasket is the simplest non-trivial example of a fractal to which the standard theory
of analysis on fractals applies.  We refer to the monographs~\cite{Kigamibook,Strichartzbook} for
detailed proofs of all results we use from this theory. Recall that $SG\subset\mathbb{R}^{2}$ is
the attractor of an iterated function system consisting of three maps $F_{i}(x)=(x+q_{i})/2$, where
the points $q_{0}$, $q_{1}$ and $q_{2}$ are the vertices of an equilateral triangle.  This means
that $SG=\cup_{i} F_{i}(SG)$, where the sets $F_{i}(SG)$ are usually referred to as $1$-cells.  For
a length $m$ word $w=w_{1}\dotsc w_{m}$ with letters $w_{j}\in \{0,1,2\}$ we define
$F_{w}=F_{w_{1}}\circ\dotsm\circ F_{w_{m}}$ and call $F_{w}(SG)$ an $m$-cell.   The points $q_{i}$,
$i=0,1,2$ are the boundary of $SG$; the set of boundary points is $V_{0}$, and we use $V_{m}$ to
denote points of the form $F_{w}(q_{i})$ where $w$ is a word of length $m$.  These $V_{m}$ are
vertices of the usual graph approximation of $SG$ at scale $m$, in which vertices $x$ and $y$ are
joined by an edge (written $x\sim_{m}y$) if they belong to a common $m$-cell.  Clearly
$V_{\ast}=\cup_{m}V_{m}$ is dense in $SG$.

The Laplacian $\Delta$ on $SG$ is a renormalized limit of graph Laplacians $\Delta_{m}$ on the
$m$-scale graphs:
\begin{gather}
    \Delta u(x) = \frac{3}{2}\lim_{m\rightarrow\infty} 5^{m} \Delta_{m}u (x) \label{intro_defnofDelta}\\
    \Delta_{m} u (x)  = \sum_{y\sim_{m}x} \bigl( u(y)-u(x) \bigr) \quad \text{for $x\in V_{m}\setminus
    V_{0}$}
    \end{gather}
and we say $u\in\dom(\Delta)$ if the right side of \eqref{intro_defnofDelta} converges uniformly on
$V_{\ast}\setminus V_{0}$ to a continuous function.  The function is extended to $SG$ by continuity
and density of $V_{\ast}$.  At a boundary point $q_{i}\in V_{0}$ there is an associated normal
derivative defined (with $q_{i+3}=q_{i}$) by
\begin{equation*}
    \partial_{n}u(q_{i})
    =\lim_{m\rightarrow\infty} \Bigl(\frac{5}{3}\Bigr)^{m}
    \bigl( 2u(q_{i})-u(F^{m}_{i}(q_{i+1}))-u(F^{m}_{i}(q_{i+2})) \bigr).
    \end{equation*}

A harmonic function $h$ is one for which $\Delta h=0$, and for any assignment of values on $V_{0}$
there is a unique harmonic function with these boundary values.  For this reason we identify the
harmonic functions with the space of functions on $V_{0}$. The harmonic functions are also graph
harmonic, so it is elementary to compute the values of $h$ on $V_{1}$ from those on $V_{0}$, and
recursively to obtain the values on $V_{m}$ for any $m$.  It will be useful to formalize this by
defining the harmonic extension matrices $A_{i}$, which map the values of $h$ on $V_{0}$ to those
on $F_{i}(V_{0})$, by
\begin{equation*}
    \begin{pmatrix}
        h\circ F_{i}(q_{0}) \\
        h\circ F_{i}(q_{1}) \\
        h\circ F_{i}(q_{2})
        \end{pmatrix}
    = A_{i} \begin{pmatrix}
        h(q_{0})\\
        h(q_{1})\\
        h(q_{2})
        \end{pmatrix}
    \end{equation*}
and more generally $\bigl( h\circ F_{w}(q_{0}),h\circ F_{w}(q_{1}),h\circ F_{w}(q_{2}) \bigr)^{T} =
A_{w} \bigl( h(q_{0}),h(q_{1}),h(q_{2})\bigr)^{T}$, where $A_{w}=A_{w_{m}}A_{w_{m-1}}\dotsm
A_{w_{1}}$.  We usually write this in the compact form
\begin{equation*}
    h \evald{F_{w}V_{0}} = A_{w} h\evald{V_{0}}.
    \end{equation*}
The matrices are
\begin{equation*}
    A_{0}=\frac{1}{5}
        \begin{pmatrix}
        5&0&0\\
        2&2&1\\
        2&1&2
        \end{pmatrix},
    \quad
    A_{1}=\frac{1}{5}
        \begin{pmatrix}
        2&2&1\\
        0&5&0\\
        1&2&2
        \end{pmatrix},
    \quad
    A_{2}=\frac{1}{5}
        \begin{pmatrix}
        2&1&2\\
        1&2&2\\
        0&0&5
        \end{pmatrix}.
    \end{equation*}

The structure of eigenfunctions of the Laplacian is similar to that of the harmonic functions.
Specifically, it is true on $SG$ that if $m$ is sufficiently large then the restriction of a
function $u$ satisfying $-\Delta u=\lambda u$ from $SG$ to $V_{m}$ gives an eigenfunction of the
graph Laplacian $-\Delta_{m} u = \lambda_{m}u$, with
\begin{gather}
    \lambda_{m}(5-\lambda_{m}) = \lambda_{m-1} \label{intro_recursionforlambdam}\\
    \lambda = \frac{3}{2} \lim_{m\rightarrow\infty} 5^{m} \lambda_{m}.
    \label{intro_lambdafromlimlambdam}
    \end{gather}
Note that \eqref{intro_recursionforlambdam} implies that $\lambda_{m}$ is one of
$\frac{1}{2}(5\pm\sqrt{25-4\lambda_{m-1}})$, but the positive root is only permitted to occur for
finitely many values of $m$ in order that the limit in \eqref{intro_lambdafromlimlambdam} exists.
This {\em spectral decimation} property was first recognized by Fukashima and
Shima~\cite{MR1245223}.  It is not true on all fractals, but on those where it is valid, it gives
both a method for computing the spectrum and a recursion for the eigenfunctions~\cite{MR1997913}.
Let us define
\begin{align}
    A_{0}(\lambda) &=\frac{1}{(5-\lambda)(2-\lambda)}
        \begin{pmatrix}
        (5-\lambda)(2-\lambda)&0&0\\
        (4-\lambda)&(4-\lambda) &2\\
        (4-\lambda)&2&(4-\lambda)
        \end{pmatrix} \label{intro_Azerolambda}\\
    A_{1}(\lambda)&=\frac{1}{(5-\lambda)(2-\lambda)}
        \begin{pmatrix}
        (4-\lambda)&(4-\lambda)&2\\
        0&(5-\lambda)(2-\lambda)&0\\
        2&(4-\lambda)&(4-\lambda)
        \end{pmatrix} \label{intro_Aonelambda}\\
    A_{2}(\lambda)&=\frac{1}{(5-\lambda)(2-\lambda)}
        \begin{pmatrix}
        (4-\lambda)&2&(4-\lambda)\\
        2&(4-\lambda)&(4-\lambda)\\
        0&0&(5-\lambda)(2-\lambda) \label{intro_Atwolambda}
        \end{pmatrix}
    \end{align}
provided $\lambda\neq 2,5$.  The essence of the spectral decimation method on $SG$ may then be
summarized in the following theorem, which we have taken from Sections 3.2 and 3.3
of~\cite{Strichartzbook}.

\begin{theorem}[Spectral Decimation Method]\label{intro_nonDirefns}
If  $(\Delta+\lambda)u=0$ then there is a sequence $\{\lambda_{m}\}_{m\geq m_{0}}$ satisfying
\eqref{intro_recursionforlambdam} and \eqref{intro_lambdafromlimlambdam}, and such that
$\lambda_{m}\neq2,5,6$ for $m>m_{0}$, with the property that
$(\Delta_{m}+\lambda_{m})u\evald{V_{m}}=0$ for all $m\geq m_{0}$. The values of $u$ on $V_{m}$ for
$m>m_{0}$ can be constructed recursively using the matrices from \eqref{intro_Azerolambda},
\eqref{intro_Aonelambda}, and \eqref{intro_Atwolambda} as follows.  If $w=w_{1}w_{2}\dotsc w_{m}$
then
\begin{equation}\label{intro_spectdecfmla}
    u\evald{F_{w}(V_{0})} = A_{w_{m}}(\lambda_{m})A_{w_{m-1}}(\lambda_{m-1})\dotsm A_{w_{m_{0}+1}}(\lambda_{m_{0}+1}) u\evald{F_{w_{1}\dotsc
    w_{m_{0}}}(V_{0})},
    \end{equation}
and we call this the spectral decimation relation.

If $\lambda$ is not a Dirichlet eigenvalue then we may assume $m_{0}=0$, at which point the
condition $(\Delta_{m_{0}}+\lambda_{m_{0}})u\evald{V_{m_{0}}}=0$ is taken to be vacuous.  The
corresponding eigenspace is $3$-dimensional and parametrized by the values of $u$ on $V_{0}$.

If $\lambda$ is Dirichlet several possibilities occur.  We indicate the initial configurations, all
of which may then be continued by the spectral decimation formula.  A spanning set for the
configurations when $m_0=1$ are shown in Figure~\ref{twoandfiveseries}. The one on the left has
$\lambda_{1}=2$ while those on the right have $\lambda_{1}=5$.  If $m_{0}\geq 2$ then
$\lambda_{m_{0}}=5$ or $\lambda_{m_{0}}=6$, and in the latter case $\lambda_{m_{0}+1}=3$.  Those
with $\lambda_{m_0}=5$ are formed from scaled and rotated copies of the functions on the right in
Figure~\ref{twoandfiveseries}, arranged so that their normal derivatives cancel. A basis of chains
for $m_{0}=2$ is shown in Figure~\ref{localizedfives}; those for general $m$ are naturally indexed
by the loops in $V_{m}$, plus two strands connecting points of $V_{0}$.  In the case
$\lambda_{m_0}=6$ the eigenfunctions are indexed by points in $V_{m_{0}-1}\setminus V_{0}$; a basis
is obtained by scaling and rotating two copies of the function on the left in
Figure~\ref{sixseries} and gluing them at the chosen point, as shown on the right in
Figure~\ref{sixseries} for the case $m_{0}=2$ and a point in $V_{1}$.
\end{theorem}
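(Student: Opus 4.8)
The plan is to reduce the whole statement to one local computation on a single $1$-cell and then iterate it across scales. \emph{First}, fix $\mu$ and consider a function $u$ on $V_{1}$ satisfying the graph eigenvalue equation $(\Delta_{1}+\mu)u=0$ at the three non-boundary vertices, i.e.\ at the edge midpoints. Written out, this says that $(4-\mu)$ times the value at a midpoint equals the sum of the values at its four $1$-neighbours; since each midpoint is adjacent to two vertices of $V_{0}$ and to the two other midpoints, this is a $3\times3$ linear system for the midpoint values in terms of $u\evald{V_{0}}$. The self-coupling of the midpoints is the adjacency matrix of a triangle, with eigenvalues $2,-1,-1$, so the coefficient matrix $(4-\mu)I-(\text{triangle adjacency})$ has determinant $(2-\mu)(5-\mu)^{2}$ and is invertible precisely when $\mu\neq2,5$. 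Solving and recording the map $u\evald{V_{0}}\mapsto u\evald{F_{i}(V_{0})}$ yields exactly the matrices $A_{i}(\mu)$ of \eqref{intro_Azerolambda}--\eqref{intro_Atwolambda}; one factor $(5-\mu)$ cancels against the cofactors, leaving the denominator $(5-\mu)(2-\mu)$ displayed there.

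\emph{Next}, I would propagate the eigenvalue equation between consecutive scales. Suppose $u$ on $V_{m}$ solves $(\Delta_{m}+\mu)u=0$ on $V_{m}\setminus V_{0}$ with $\mu\notin\{2,5,6\}$. A vertex of $V_{m}\setminus V_{m-1}$ is an edge midpoint of a unique $(m-1)$-cell, so the previous step applies inside that cell and shows $u$ there is the image of the cell's $V_{m-1}$-values under the appropriate $A_{i}(\mu)$; since no such midpoint is shared between cells, this assignment is automatically consistent. Substituting these midpoint formulae into $(\Delta_{m}+\mu)u=0$ at a retained vertex $x\in V_{m-1}\setminus V_{0}$ (which lies in exactly two $(m-1)$-cells) and clearing denominators gives, after simplification, the identity
\[
    (6-\mu)\Bigl[\Delta_{m-1}u(x)+\mu(5-\mu)u(x)\Bigr]=0 .
\]
Thus, because $\mu\neq6$, $u\evald{V_{m-1}}$ satisfies $(\Delta_{m-1}+\mu(5-\mu))u=0$ on $V_{m-1}\setminus V_{0}$: this is the origin of the three forbidden values, with $2$ and $5$ obstructing the inversion above and $6$ rendering the displayed identity vacuous. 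Conversely, given $u$ on $V_{m-1}$ with $(\Delta_{m-1}+\nu)u=0$ and any root $\mu$ of $\mu(5-\mu)=\nu$ with $\mu\notin\{2,5,6\}$, extending cell-by-cell by the $A_{i}(\mu)$ produces a well-defined function on $V_{m}$ that restricts to the given one and solves $(\Delta_{m}+\mu)u=0$. Hence restriction and $A_{i}$-extension are mutually inverse bijections of graph eigenspaces whenever the forbidden set is avoided, and iterating the extension is precisely \eqref{intro_spectdecfmla}.

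\emph{Then} I would produce the sequence and the index $m_{0}$. If $(\Delta+\lambda)u=0$ then for all large $m$ the restriction $u\evald{V_{m}}$ is a graph eigenfunction with some eigenvalue $\lambda_{m}$ --- this passage from membership in $\dom(\Delta)$ to the exact graph-eigenvalue equation at large scales is carried out in Sections 3.2--3.3 of \cite{Strichartzbook} --- and the recursion \eqref{intro_recursionforlambdam} is the cross-scale identity of the preceding step read with $\mu=\lambda_{m}$. Since $5^{m}\lambda_{m}$ converges we have $\lambda_{m}\to0$, which forces the branch $\lambda_{m}=\tfrac12\bigl(5-\sqrt{25-4\lambda_{m-1}}\bigr)$ for all sufficiently large $m$ (the other branch stays bounded away from $0$) and also $\lambda_{m}\notin\{2,5,6\}$ for all large $m$; take $m_{0}$ beyond this point, so that \eqref{intro_lambdafromlimlambdam} holds by construction. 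When $\lambda$ is not a Dirichlet eigenvalue no forbidden value is ever encountered, so the extension may be run all the way down to scale $0$: $u$ is determined on every $V_{m}$ from $u\evald{V_{0}}$ by the $A_{i}(\lambda_{m})$, and conversely every choice of $u\evald{V_{0}}$ extends, so the eigenspace is $3$-dimensional and parametrized by $V_{0}$.

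\emph{Finally}, in the Dirichlet case one has $u\evald{V_{0}}=0$, so the parametrization above is trivial and the first genuine initial data appears at the least scale $m_{0}\geq1$ at which the downward extension fails to be invertible, that is, where $\lambda_{m_{0}}\in\{2,5,6\}$ (and if $\lambda_{m_{0}}=6$ the recursion forces $\lambda_{m_{0}+1}=3$, its only admissible preimage). For each of these finitely many base eigenvalues one solves the finite linear system consisting of the graph eigenvalue equation on $V_{m_{0}}\setminus V_{0}$, the vanishing condition on $V_{0}$, and --- for $m_{0}\geq2$ --- the cancellation of normal derivatives needed to glue localized pieces into a global eigenfunction; this reproduces the configurations of Figures~\ref{twoandfiveseries}, \ref{localizedfives} and \ref{sixseries}, with the $\lambda_{m_{0}}=5$ ``loop'' eigenfunctions indexed for general $m_{0}$ by the independent cycles of $V_{m_{0}}$ together with two boundary-joining strands, and the $\lambda_{m_{0}}=6$ eigenfunctions indexed by $V_{m_{0}-1}\setminus V_{0}$. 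Continuing each by \eqref{intro_spectdecfmla} exhausts the Dirichlet eigenfunctions. The algebraic heart of the argument is the cross-scale identity of the second step, which simultaneously pins down the recursion \eqref{intro_recursionforlambdam} and the exact forbidden set $\{2,5,6\}$; the most laborious part is the case analysis of the Dirichlet base configurations and the verification of their combinatorial indexing against the figures.
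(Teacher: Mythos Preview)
The paper does not supply its own proof of this theorem: it is quoted as background from Sections~3.2--3.3 of~\cite{Strichartzbook}, so there is no in-paper argument to compare against. Your sketch is essentially the standard derivation found in that reference---the local $3\times3$ inversion with determinant $(2-\mu)(5-\mu)^{2}$ producing the $A_{i}(\mu)$, the cross-scale identity $(6-\mu)\bigl[\Delta_{m-1}u+\mu(5-\mu)u\bigr]=0$ isolating the forbidden set $\{2,5,6\}$ and the recursion~\eqref{intro_recursionforlambdam}, and then the Dirichlet case analysis---and it is correct as an outline. The one genuinely nontrivial step you rightly flag and defer to~\cite{Strichartzbook} is the passage from $(\Delta+\lambda)u=0$ to the \emph{exact} graph equation $(\Delta_{m}+\lambda_{m})u\evald{V_{m}}=0$ at large $m$; this does not follow immediately from the limit definition~\eqref{intro_defnofDelta} and requires the argument in the cited sections.
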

\begin{figure}[htb]
\begin{picture}(105.6,90)(0,0)
\setlength{\unitlength}{.23pt} \Spic{2}{32}{0}{0} \put(183,-40){$1$} \put(382,-40){$0$}
\put(-16,-40){$0$} \put(58,152){$1$} \put(186,330){$0$} \put(300,152){$1$}
\end{picture}
\begin{picture}(105.6,90)(0,0)
\setlength{\unitlength}{.23pt} \Spic{2}{32}{0}{0} \put(183,-40){$1$} \put(382,-40){$0$}
\put(-16,-40){$0$} \put(40,152){$-1$} \put(186,330){$0$} \put(300,152){$0$}
\end{picture}
\begin{picture}(105.6,90)(0,0)
\setlength{\unitlength}{.23pt} \Spic{2}{32}{0}{0} \put(183,-40){$1$} \put(382,-40){$0$}
\put(-16,-40){$0$} \put(50,152){$0$} \put(186,330){$0$} \put(308,152){$-1$}
\end{picture}
\caption{Dirichlet eigenfunctions with $m_0 = 1$.}\label{twoandfiveseries}
\end{figure}
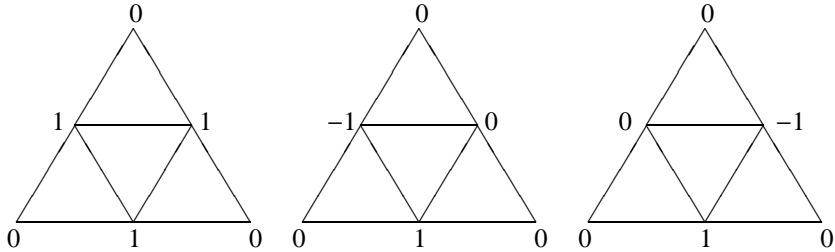
\begin{figure}[htb]
\begin{picture}(105.6,90)(0,0)
\setlength{\unitlength}{.23pt} \Spic{4}{16}{0}{0} \put(182,-40){$0$} \put(382,-40){$0$}
\put(-18,-40){$0$} \put(64,150){$0$} \put(184,330){$0$} \put(300,150){$0$} \put(210,70){$0$}
\put(152,70){$1$} \put(170,126){$-1$} \put(112,228){$0$} \put(248,228){$1$} \put(15,70){$0$}
\put(345,70){$0$} \put(72,-40){$-1$} \put(278,-40){$0$}
\end{picture}
\begin{picture}(105.6,90)(0,0)
\setlength{\unitlength}{.23pt} \Spic{4}{16}{0}{0} \put(182,-40){$0$} \put(382,-40){$0$}
\put(-18,-40){$0$} \put(64,150){$0$} \put(184,330){$0$} \put(300,150){$0$} \put(210,70){$0$}
\put(152,70){$0$} \put(184,126){$0$} \put(112,228){$1$} \put(248,228){$-1$} \put(-6,70){$-1$}
\put(345,70){$1$} \put(85,-40){$1$} \put(265,-40){$-1$}
\end{picture}
\begin{picture}(105.6,90)(0,0)
\setlength{\unitlength}{.23pt} \Spic{4}{16}{0}{0} \put(182,-40){$0$} \put(382,-40){$0$}
\put(-18,-40){$0$} \put(64,150){$0$} \put(184,330){$0$} \put(300,150){$0$} \put(210,70){$1$}
\put(152,70){$0$} \put(170,126){$-1$} \put(112,228){$1$} \put(248,228){$0$} \put(15,70){$0$}
\put(345,70){$0$} \put(85,-40){$0$} \put(265,-40){$-1$}
\end{picture}
\caption{A basis of chains for $m_0 = 2$ and $\lambda_{m_0} = 5$} \label{localizedfives}
\end{figure}
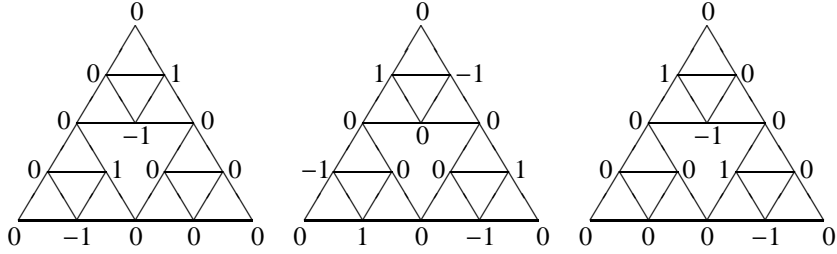
\begin{figure}[htb]
\centerline{{ \small
\begin{picture}(105.6,90)(0,0)
\setlength{\unitlength}{.23pt} \Spic{2}{32}{0}{0} \put(170,-40){$-1$} \put(382,-40){$0$}
\put(-18,-40){$2$} \put(46,152){$-1$} \put(186,330){$0$} \put(297,152){$1$}
\end{picture}
\hspace{5em}
\begin{picture}(105.6,90)(0,0)
\setlength{\unitlength}{.23pt} \Spic{4}{16}{0}{0} \put(182,-40){$2$} \put(382,-40){$0$}
\put(-18,-40){$0$} \put(68,150){$0$} \put(184,330){$0$} \put(300,150){$0$} \put(194,72){$-1$}
\put(146,72){$-1$} \put(184,126){$0$} \put(112,228){$0$} \put(248,228){$0$} \put(18,72){$1$}
\put(345,72){$1$} \put(72,-40){$-1$} \put(265,-40){$-1$}
\end{picture}
}}
\caption{Eigenfunction construction in the case $\lambda_{m_0} = 6$.}\label{sixseries}
\end{figure}
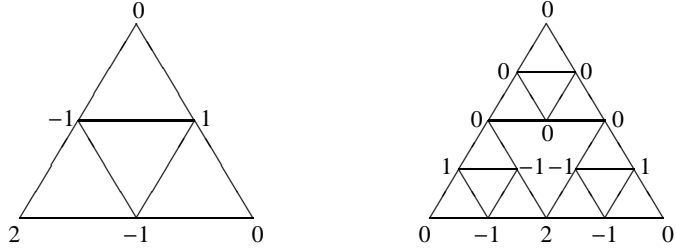

\section{Tangents to Eigenfunctions}

In~\cite{MR1761365}, Teplyaev introduced the notion of a harmonic gradient and harmonic tangent for
functions on the Sierpinski Gasket.  He also proved that functions in the domain of the Laplacian
have harmonic gradients at all points in a set of full measure, and that functions with H\"{o}lder
continuous Laplacian have harmonic gradients at all points.  Eigenfunctions of the Laplacian fall
into the latter category because of the well-known fact that continuity of $\Delta^{2}u$ implies
H\"{o}lder continuity of $\Delta u$ (\cite{Kigamibook}, Lemma 2.2.5).

\begin{definition}[\protect{\cite{MR1761365}, Section 3}]
Let $w=w_{1}w_{2}\dotsc$ be an infinite word, and $[w]_{m}=w_{1}w_{2}\dotsc w_{m}$ be the length
$m$ truncation of $w$.  If $u$ is a function on $SG$ we let $H_{[w]_{m}}u$ be the harmonic function
on $SG$ which coincides with $u$ on $F_{[w]_{m}}(V_{0})$, so
\begin{equation}\label{tangents_fmlaforharmonicapprox}
    H_{[w]_{m}}u= A_{[w]_{m}}^{-1} u\evald{F_{[w]_{m}}(V_{0})}
    = A_{w_{1}}^{-1}\dotsm A_{w_{m}}^{-1} u\evald{F_{[w]_{m}}(V_{0})}.
    \end{equation}
Define the harmonic tangent $T_{w}u$ of $u$ at $w$ to be $\lim_{m\rightarrow\infty} H_{[w]_{m}}u$,
if the limit exists.  It should be remarked that there can be two words $w$ and $w'$ such that
$F_{w}(SG)=F_{w'}(SG)$.  We will nonetheless treat the tangents $T_{w}$ and $T_{w'}$ separately, as
they are rarely equal.

The harmonic gradient is defined in the same way, but using the space of harmonic functions with
average zero and the projection of the action of the matrices $A_{i}$ to this subspace.  It is
evident that if $u$ is continuous then the gradient exists whenever the tangent exists, and
conversely.  See~\cite{MR1761364} for details.
\end{definition}
If we consider the interval $[0,1]$ rather than the Sierpinski Gasket then it is clear that the
harmonic tangent at $x_0$ is the vector $\bigl(L(0), L(1)\bigr)^T$ characterizing the unique linear
function $L(x)$ having the properties $L(x_0)= f(x_0)$ and $L'(x_0) = f'(x_0)$. For an eigenvalue
$\lambda \in \mathbb{R}$ which is not equal to $\pi^2k^2$ for any $k \in \mathbb{N}$, it may
readily  be verified that the harmonic tangent is given by
\begin{equation}\label{intervalcaseone}
    M(x_0)
    \begin{pmatrix}
        f(0)\\
        f(1)
    \end{pmatrix}
\end{equation}
where
\begin{equation}\label{intervalcasetwo}
    M(x_0) = \frac{1}{\sin\sqrt{\lambda}}
    \begin{pmatrix}
        a + b && a- b\\
        a-b && a+b
    \end{pmatrix}
\end{equation}
and
\begin{equation}\label{intervalcasethree}
a =\sin\left((1-x_0)\sqrt{\lambda}\right),\quad
b=\sqrt{\lambda}x_0\cos\left((1-x_0)\sqrt{\lambda}\right).
\end{equation}
We cannot obtain as explicit a description of the harmonic tangent on SG; however, we produce
formulae that permit its computation at any point of $V_*$. The key observation is that when $u$ is
an eigenfunction, the computation of the gradient has a particularly elegant structure.  Recall
from Theorem~\ref{intro_nonDirefns} that the values of $u$ on $F_{[w]_{m}}(V_{0})$ may be computed
using the spectral decimation method, meaning that starting from a scale $m_{0}$ they can be
obtained as in~\eqref{intro_spectdecfmla}. Combining this
with~\eqref{tangents_fmlaforharmonicapprox} we see that
\begin{align}
    T_{w}u
    &= \lim_{m\rightarrow\infty} A_{w_{1}}^{-1}\dotsm A_{w_{m}}^{-1}
        \cdot A_{w_{m}}(\lambda_{m})\dotsm A_{w_{m_{0}+1}}(\lambda_{m_{0}+1})
        \, u\evald{F_{w_{1}\dotsc w_{m_{0}}}(V_{0})} \notag\\
    &= \Bigl( A_{w_{1}}^{-1}\dotsm A_{w_{m_{0}}}^{-1} \Bigr)
        \Bigl(  \lim_{m\rightarrow\infty} A_{w_{m_{0}+1}}^{-1} \dotsm A_{w_{m}}^{-1} \cdot A_{w_{m}}(\lambda_{m})\dotsm
        A_{w_{m_{0}+1}}(\lambda_{m_{0}+1}) \Bigr)
        \,  u\evald{F_{w_{1}\dotsc w_{m_{0}}}(V_{0})} \label{tangent_fmlafortangent}
    \end{align}
in which we know the limit exists by Theorem 3 of~\cite{MR1761365}. A special case occurs when
$F_{w}(SG)$ is a point in $V_{\ast}$, because in this case, all but finitely many letters in the
word $w$ are equal to a single letter $i$.  By taking $m_{0}$ to be sufficiently large we see that
it is useful to understand the limit
\begin{equation*}
    \lim_{k\rightarrow\infty}
        A_{i}^{-k} \cdot A_{i}(\lambda_{m_{0}+k})\dotsm A_{i}(\lambda_{m_{0}+1})
    \end{equation*}
and it is evident from the symmetry of the matrices $A_{i}$ and $A_{i}(\lambda)$ that it suffices
to deal with the case $i=0$.

\begin{theorem}\label{tangents_explicitformula}
Let $\alpha=(0,1,1)^{T}$, $\beta=(0,1,-1)^{T}$, $\gamma_{m}=(4,4-\lambda_{m},4-\lambda_{m})^{T}$.
If neither of the values $2$ or $5$ occur in the sequence $\{\lambda_{m}\}_{m>m_0}$, then
\begin{gather*}
    \lim_{k\rightarrow\infty}
        A_{0}^{-k} \cdot A_{0}(\lambda_{m_{0}+k})\dotsm A_{0}(\lambda_{m_{0}+1})\, \alpha
    = \frac{ 4\lambda } {3\cdot5^{m_{0}}\lambda_{m_{0}}(2-\lambda_{m_{0}+1})}
        \prod_{j=2}^{\infty} \Bigl( 1 -\frac{\lambda_{m_{0}+j}}{3} \Big)\, \alpha \\
    \lim_{k\rightarrow\infty}
        A_{0}^{-k} \cdot A_{0}(\lambda_{m_{0}+k})\dotsm A_{0}(\lambda_{m_{0}+1})\, \beta
    = \frac{2\lambda}{3\cdot5^{m_{0}}\lambda_{m_{0}}}\, \beta\\
    \lim_{k\rightarrow\infty}
        A_{0}^{-k} \cdot A_{0}(\lambda_{m_{0}+k})\dotsm A_{0}(\lambda_{m_{0}+1})\, \gamma_{m_{0}}
    = (4,4,4)^{T}
    \end{gather*}
\end{theorem}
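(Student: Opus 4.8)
The plan is to exploit the common eigenstructure of $A_0$ and of the one-parameter family $A_0(\lambda)$. A direct computation shows that $\beta=(0,1,-1)^T$ is a common eigenvector, with $A_0\beta=\frac15\beta$ and $A_0(\lambda)\beta=\frac{1}{5-\lambda}\beta$, and that $\alpha=(0,1,1)^T$ is a common eigenvector, with $A_0\alpha=\frac35\alpha$ and $A_0(\lambda)\alpha=\frac{6-\lambda}{(5-\lambda)(2-\lambda)}\alpha$; moreover $(1,1,1)^T$ is fixed by $A_0$, hence by $A_0^{-1}$. Consequently, on each of the three vectors in the statement the operator $A_0^{-k}A_0(\lambda_{m_0+k})\dotsm A_0(\lambda_{m_0+1})$ acts either by multiplication by an explicit product of scalars (for $\alpha$ and $\beta$) or in a way that can be tracked directly (for $\gamma_{m_0}$), so the theorem reduces to evaluating these products and passing to the limit; this simultaneously re-establishes existence of the limits discussed in Section~2.

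For $\beta$ the scalar is $\prod_{j=1}^k \frac{5}{5-\lambda_{m_0+j}}$. Rewriting $5-\lambda_m=\lambda_{m-1}/\lambda_m$ from~\eqref{intro_recursionforlambdam} makes the product telescope to $5^k\lambda_{m_0+k}/\lambda_{m_0}$, and~\eqref{intro_lambdafromlimlambdam} gives the limit $\frac{2\lambda}{3\cdot5^{m_0}\lambda_{m_0}}$. For $\alpha$ the scalar is $\prod_{j=1}^k \frac53\cdot\frac{6-\lambda_{m_0+j}}{(5-\lambda_{m_0+j})(2-\lambda_{m_0+j})}$; I would factor out $\prod_{j=1}^k\frac{5}{5-\lambda_{m_0+j}}$ exactly as before, leaving the residual factor $\prod_{j=1}^k \frac{6-\lambda_{m_0+j}}{3(2-\lambda_{m_0+j})}$. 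The key identity, obtained by expanding~\eqref{intro_recursionforlambdam}, is $6-\lambda_{m-1}=6-5\lambda_m+\lambda_m^2=(2-\lambda_m)(3-\lambda_m)$, i.e.\ $6-\lambda_{m_0+j}=(2-\lambda_{m_0+j+1})(3-\lambda_{m_0+j+1})$. Substituting this, the $(2-\lambda)$ factors telescope and the residual factor becomes $\frac{2-\lambda_{m_0+k+1}}{2-\lambda_{m_0+1}}\prod_{i=2}^{k+1}\frac{3-\lambda_{m_0+i}}{3}$. Combining with the $\beta$-type factor and letting $k\to\infty$ (using $\lambda_m\to0$ and~\eqref{intro_lambdafromlimlambdam}) yields the stated formula. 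Since $5^m\lambda_m$ is bounded, $\lambda_{m_0+j}=O(5^{-j})$, so the infinite product $\prod_{j\geq2}(1-\lambda_{m_0+j}/3)$ converges absolutely; moreover $\lambda_{m_0+j}\neq2,3,5$ for $j\geq2$ (from the hypothesis on $2,5$ together with $\lambda_m\neq6$ for $m>m_0$ from Theorem~\ref{intro_nonDirefns}, via the same identity), so no factor or denominator vanishes.

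For $\gamma_{m_0}$ the observation that makes everything work is that $A_0(\lambda_{m+1})\gamma_m=\gamma_{m+1}$, which follows from a short computation using~\eqref{intro_recursionforlambdam}; hence $A_0(\lambda_{m_0+k})\dotsm A_0(\lambda_{m_0+1})\gamma_{m_0}=\gamma_{m_0+k}$. Writing $\gamma_{m_0+k}=(4,4,4)^T-\lambda_{m_0+k}\alpha$ and using that $A_0^{-k}$ fixes $(4,4,4)^T$ and multiplies $\alpha$ by $(5/3)^k$, we obtain $A_0^{-k}\gamma_{m_0+k}=(4,4,4)^T-\lambda_{m_0+k}(5/3)^k\alpha$; since $5^k\lambda_{m_0+k}$ converges, $\lambda_{m_0+k}(5/3)^k=(5^k\lambda_{m_0+k})/3^k\to0$, and the limit is $(4,4,4)^T$.

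The main obstacle is the $\alpha$ case: recognizing that the identity $6-\lambda_{m-1}=(2-\lambda_m)(3-\lambda_m)$ is precisely what turns the residual factor into a telescoping product, and then keeping straight the index bookkeeping between the base scale $m_0$, the running length $k$, and the one-step shift introduced by that identity. The $\beta$ and $\gamma_{m_0}$ cases are routine by comparison, and all convergence issues are controlled by the geometric decay of $\{\lambda_m\}$ coming from~\eqref{intro_lambdafromlimlambdam}.
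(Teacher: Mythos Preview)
Your argument is correct and follows essentially the same route as the paper: both treat $\alpha$ and $\beta$ as common eigenvectors of $A_0$ and $A_0(\lambda)$, use the telescoping $\prod(5-\lambda_{m_0+j})^{-1}=\lambda_{m_0+k}/\lambda_{m_0}$ together with~\eqref{intro_lambdafromlimlambdam} for $\beta$, the identity $6-\lambda_{m-1}=(2-\lambda_m)(3-\lambda_m)$ to telescope the residual factor for $\alpha$, and the relation $A_0(\lambda_{m+1})\gamma_m=\gamma_{m+1}$ followed by the decomposition $\gamma_{m_0+k}=(4,4,4)^T-\lambda_{m_0+k}\alpha$ for the third limit. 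Your additional remarks on absolute convergence and on excluding $\lambda_{m_0+j}=3$ via $\lambda_m\neq6$ are a helpful gloss but do not change the strategy.
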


\begin{proof}
Observe that $\alpha$ and $\beta$ are eigenvectors of $A_{0}(\lambda)$, with eigenvalues
$(6-\lambda)(2-\lambda)^{-1}(5-\lambda)^{-1}$ and $(5-\lambda)^{-1}$ respectively, provided
$\lambda\neq 2,5$.  As $A_{0}=A_{0}(0)$ is a special case, we compute immediately that
\begin{equation*}
    \lim_{k\rightarrow\infty}
        A_{0}^{-k} \cdot A_{0}(\lambda_{m_{0}+k})\dotsm A_{0}(\lambda_{m_{0}+1}) \beta
    = \lim_{k\rightarrow\infty} 5^{k} \prod_{j=1}^{k} (5-\lambda_{m_{0}+j})^{-1} \beta.
    \end{equation*}
However induction on \eqref{intro_recursionforlambdam} implies that $\lambda_{m_{0}}=
\lambda_{m_{0}+k} \prod_{j=1}^{k} (5-\lambda_{m})$ and therefore
\begin{equation*}
    \lim_{k\rightarrow\infty}
        A_{0}^{-k} \cdot A_{0}(\lambda_{m_{0}+k})\dotsm A_{0}(\lambda_{m_{0}+1}) \beta
    = \lim_{k\rightarrow\infty} 5^{k} \lambda_{m_{0}+k} \lambda_{m_{0}}^{-1} \,\beta
    = \frac{2\lambda}{3\cdot5^{m_{0}}\lambda_{m_{0}}} \beta
    \end{equation*}
The corresponding computation for $\alpha$ can be simplified by observing from
\eqref{intro_recursionforlambdam} that $(6-\lambda_{m})=(3-\lambda_{m+1})(2-\lambda_{m+1})$,
whereupon
\begin{align*}
    \lefteqn{\lim_{k\rightarrow\infty}
        A_{0}^{-k} \cdot A_{0}(\lambda_{m_{0}+k})\dotsm A_{0}(\lambda_{m_{0}+1}) \alpha }\quad & \\
    &= \lim_{k\rightarrow\infty} \Bigl(\frac{5}{3} \Bigr)^{k} \prod_{j=1}^{k}
        \frac{(6-\lambda_{m_{0}+j})}{(5-\lambda_{m_{0}+j}) (2-\lambda_{m_{0}+j})} \alpha \\
    &= \lim_{k\rightarrow\infty} \biggl( \frac{ 2-\lambda_{m_{0}+k}} {2-\lambda_{m_{0}+1}} \biggr)
        \biggl( 5^{k} \prod_{j=1}^{k} \frac{1}{5-\lambda_{m_{0}+j}} \biggr)
        \biggl( 3^{-k} \prod_{j=1}^{k} (3-\lambda_{m_{0}+j+1} ) \bigg) \alpha\\
    &= \biggl( \frac{ 2 } {2-\lambda_{m_{0}+1}} \biggr)
        \biggl( \frac{2\lambda}{3\cdot5^{m_{0}}\lambda_{m_{0}}} \biggr)
        \prod_{j=2}^{k+1} \Bigl( 1 -\frac{\lambda_{m_{0}+j}}{3} \Big)\, \alpha
    \end{align*}
where we used the previously computed limit for the middle factor, and the fact that
$\lambda_{m}=O(5^{-m})$ as $m\rightarrow\infty$ from \eqref{intro_lambdafromlimlambdam}.

For $\gamma_{m}$ the situation is a little different.  Observe that
\begin{equation*}
    A_{0}(\lambda_{m+1}) \gamma_{m}
    = (2-\lambda_{m+1})^{-1} (5-\lambda_{m+1})^{-1}
        \begin{pmatrix}
        4 (2-\lambda_{m+1}) (5-\lambda_{m+1})\\
        4(4-\lambda_{m+1}) + (4-\lambda_{m})(6-\lambda_{m+1}) \\
        4(4-\lambda_{m+1}) + (4-\lambda_{m})(6-\lambda_{m+1})
        \end{pmatrix}
    \end{equation*}
however we can perform the following simplification from \eqref{intro_recursionforlambdam}:
\begin{align*}
    4(4-\lambda_{m+1}) + (4-\lambda_{m})(6-\lambda_{m+1})
    &= 4(4-\lambda_{m+1}) + (4- 5\lambda_{m+1}+\lambda_{m+1}^{2} )(6-\lambda_{m+1})\\
    &= 4(4-\lambda_{m+1}) + (4- \lambda_{m+1})(1- \lambda_{m+1})(6-\lambda_{m+1})\\
    &= (4-\lambda_{m+1})( 10 -7\lambda_{m+1} + \lambda_{m+1}^{2})\\
    &= (4-\lambda_{m+1}) ( 5-\lambda_{m+1}) (2-\lambda_{m+1}).
    \end{align*}
Inserting this into the previous computation shows that $A_{0}(\lambda_{m+1}) \gamma_{m} =
\gamma_{m+1}$ provided $\lambda_{m}\neq2,5$, and therefore $A_{0}(\lambda_{m_{0}+k})\dotsm
A_{0}(\lambda_{m_{0}+1})\gamma_{m_{0}}=\gamma_{m_{0}+k}$.  To proceed we must apply $A_{0}^{-k}$ to
$\gamma_{m_{0}+k}$, which is most easily done by writing it in terms of eigenvectors as
$\gamma_{m_{0}+k}=(4,4,4)^{T}-\lambda_{m_{0}+k}\alpha$.  The result is
\begin{align*}
    \lim_{k\rightarrow\infty}
        A_{0}^{-k} \cdot A_{0}(\lambda_{m_{0}+k})\dotsm A_{0}(\lambda_{m_{0}+1})
        \gamma_{m_{0}}
    &= \lim_{k\rightarrow\infty} A_{0}^{-k} \Bigl( (4,4,4)^{T}-\lambda_{m_{0}+k}\alpha \Bigr)\\
    &= \lim_{k\rightarrow\infty} \biggl( (4,4,4)^{T} - \bigl(\frac{5}{3}\Bigr)^{k}
    \lambda_{m_{0}+k} \biggr)\\
    &= (4,4,4)^{T}
    \end{align*}
because $\lambda_{m}=O(5^{-m})$ as $m\rightarrow\infty$.
\end{proof}

\begin{theorem}\label{hartan0}
Suppose $w$ is a word of the form $w=[w]_{k_{0}}000\dotsm$, that $(\Delta+\lambda)u=0$, and that
$m_{0}$ is chosen large enough that the spectral decimation formula holds with
$\lambda_{m}\neq2,5,6$ when $m>m_{0}$.  If $k=\max\{k_{0},m_{0}\}$ then
\begin{equation}\label{tangentformulawithM0}
    T_{w}u
    = \Bigl( A_{w_{1}}^{-1}\dotsm A_{w_{k}}^{-1} \Bigr)
        M_{0}(\lambda,k)\, A_{w_{k}}(\lambda_{k})\dotsm A_{w_{m_{0}+1}}(\lambda_{m_{0}+1})  \, u\evald{F_{w_{1}\dotsc w_{m_{0}}}(V_{0})}
    \end{equation}
where
\begin{equation}\label{matrixformoftangentoperation}
    M_{0}(\lambda,k)
    =\begin{pmatrix}
        1 & 0& 0\\
        1- \frac{(4-\lambda_{k})\lambda\tau_{k}(\lambda)}{3\cdot5^{k}\lambda_{k}} &
        \frac{\lambda(2\tau_{k}(\lambda)+1)}{3\cdot5^{k}\lambda_{k}} & \frac{\lambda(2\tau_{k}(\lambda)-
        1)}{3\cdot5^{k}\lambda_{k}}\\
        1- \frac{(4-\lambda_{k})\lambda\tau_{k}(\lambda)}{3\cdot5^{k}\lambda_{k}} &
        \frac{\lambda(2\tau_{k}(\lambda)-1)}{3\cdot5^{k}\lambda_{k}} & \frac{\lambda(2\tau_{k}(\lambda)+
        1)}{3\cdot5^{k}\lambda_{k}}
        \end{pmatrix}
    \end{equation}
and
\begin{equation*}
    \tau_{k}(\lambda) = \frac{1}{(2-\lambda_{k+1})} \prod_{j=2}^{\infty} \Bigl( 1 -\frac{\lambda_{k+j}}{3}
    \Big).
    \end{equation*}
\end{theorem}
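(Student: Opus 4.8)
The plan is to substitute the spectral-decimation splitting \eqref{tangent_fmlafortangent} into the definition of $T_wu$ and recognize the surviving limit as the matrix supplied by Theorem~\ref{tangents_explicitformula}. First I would note that since $w_j=0$ for all $j>k_0$ and $k=\max\{k_0,m_0\}\geq k_0$, every letter $w_j$ with $j>k$ is $0$. Hence, for $m>k$, the product inside the limit in \eqref{tangent_fmlafortangent} factors as
\begin{equation*}
    A_{w_{m_0+1}}^{-1}\dotsm A_{w_m}^{-1}\, A_{w_m}(\lambda_m)\dotsm A_{w_{m_0+1}}(\lambda_{m_0+1})
    = \bigl(A_{w_{m_0+1}}^{-1}\dotsm A_{w_k}^{-1}\bigr) \bigl(A_0^{-(m-k)} A_0(\lambda_m)\dotsm A_0(\lambda_{k+1})\bigr) \bigl(A_{w_k}(\lambda_k)\dotsm A_{w_{m_0+1}}(\lambda_{m_0+1})\bigr),
\end{equation*}
where the outer two products are independent of $m$. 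Letting $m\to\infty$ and using continuity of matrix multiplication, the middle block tends to $M_0(\lambda,k):=\lim_{j\to\infty}A_0^{-j}A_0(\lambda_{k+j})\dotsm A_0(\lambda_{k+1})$. This limit exists because $k\geq m_0$ implies $\lambda_m\neq2,5$ for every $m>k$, so Theorem~\ref{tangents_explicitformula} applies with its index $m_0$ replaced by $k$. Finally, combining the factor $A_{w_1}^{-1}\dotsm A_{w_{m_0}}^{-1}$ already present in \eqref{tangent_fmlafortangent} with the leftover $A_{w_{m_0+1}}^{-1}\dotsm A_{w_k}^{-1}$ gives $A_{w_1}^{-1}\dotsm A_{w_k}^{-1}$, and what remains is precisely \eqref{tangentformulawithM0}.

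It then remains to identify the entries of $M_0(\lambda,k)$, which I would do by computing its action on the basis $\{\alpha,\beta,\gamma_k\}$ and changing to the standard basis $\{e_1,e_2,e_3\}$. Theorem~\ref{tangents_explicitformula}, with $m_0$ replaced by $k$ and after rewriting the $\alpha$-coefficient via the definition of $\tau_k(\lambda)$, gives $M_0(\lambda,k)\alpha = \frac{4\lambda\tau_k(\lambda)}{3\cdot5^k\lambda_k}\alpha$, $M_0(\lambda,k)\beta = \frac{2\lambda}{3\cdot5^k\lambda_k}\beta$, and $M_0(\lambda,k)\gamma_k=(4,4,4)^T$. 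Since $\gamma_k=(4,4,4)^T-\lambda_k\alpha$, the last identity yields $M_0(\lambda,k)(1,1,1)^T=(1,1,1)^T+\frac{\lambda\tau_k(\lambda)}{3\cdot5^k}\alpha$. As $\{\alpha,\beta,(1,1,1)^T\}$ is a basis with $e_1=(1,1,1)^T-\alpha$, $e_2=\frac12(\alpha+\beta)$ and $e_3=\frac12(\alpha-\beta)$, the three columns of $M_0(\lambda,k)$ are $M_0(\lambda,k)e_1$, $M_0(\lambda,k)e_2$ and $M_0(\lambda,k)e_3$; carrying out these linear combinations and reading off components reproduces \eqref{matrixformoftangentoperation}.

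I do not expect a deep obstacle: once Theorem~\ref{tangents_explicitformula} is available the argument is bookkeeping. The points needing care are (i) verifying that the index shift to $k$ is legitimate, i.e.\ that $\lambda_m\neq2,5$ persists for $m>k$ (it does, since $\{m:m>k\}\subseteq\{m:m>m_0\}$, where $\lambda_m\neq2,5,6$ by hypothesis), (ii) the routine justification that fixed finite matrix products pass through the limit, and (iii) the short change-of-basis arithmetic, which is the most error-prone step. I would also remark that the cases $k_0\leq m_0$ and $k_0>m_0$ are handled uniformly: when $k=m_0$ the products $A_{w_{m_0+1}}^{-1}\dotsm A_{w_k}^{-1}$ and $A_{w_k}(\lambda_k)\dotsm A_{w_{m_0+1}}(\lambda_{m_0+1})$ are empty and \eqref{tangentformulawithM0} collapses accordingly, while when $k=k_0>m_0$ the letters $w_{m_0+1},\dots,w_{k_0}$ need not be $0$ but the factorization above is unaffected.
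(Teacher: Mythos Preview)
Your argument is correct and follows the same approach as the paper: factor the expression in \eqref{tangent_fmlafortangent} using $w_j=0$ for $j>k$, identify the surviving limit as $M_0(\lambda,k)$ via Theorem~\ref{tangents_explicitformula} with index shifted to $k$, and then read off the matrix entries. In fact your write-up supplies more detail than the paper's own proof, which simply asserts that the matrix form of $M_0$ ``follows from Theorem~\ref{tangents_explicitformula}'' without carrying out the change-of-basis arithmetic that you sketch in your second paragraph.
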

\begin{proof}
From \eqref{tangent_fmlafortangent} we have
\begin{align*}
    T_{w}u
    &= \Bigl( A_{w_{1}}^{-1}\dotsm A_{w_{k}}^{-1} \Bigr)
        \Bigl( \lim_{m\rightarrow\infty} A_{0}^{-(m-k)} \cdot A_{0}(\lambda_{m})
        \dotsm A_{0}(\lambda_{k+1}) \Bigr) \,  u\evald{F_{w_{1}\dotsc w_{k}}(V_{0})}  \\
    &= \Bigl( A_{w_{1}}^{-1}\dotsm A_{w_{k}}^{-1} \Bigr)
        \Bigl( \lim_{m\rightarrow\infty} A_{0}^{-(m-k)} \cdot A_{0}(\lambda_{m})
        \dotsm A_{0}(\lambda_{k+1}) \Bigr) \, A_{w_{k}}(\lambda_{k})\dotsm A_{w_{m_{0}+1}}(\lambda_{m_{0}+1})
        \, u\evald{F_{w_{1}\dotsc w_{m_{0}}}(V_{0})}
    \end{align*}
because $w_{j}=0$ for $j\geq k$ and spectral decimation applies for $j\geq m_{0}$.  The result is
therefore equivalent to
\begin{equation*}
    M_{0} = \lim_{m\rightarrow\infty} A_{0}^{-(m-k)} \cdot A_{0}(\lambda_{m}) \dotsm A_{0}(\lambda_{k+1})
\end{equation*}
which follows from Theorem \ref{tangents_explicitformula}.
\end{proof}

\begin{remark}
The function $\tau_{k}(\lambda)$ may appear to depend on the sequence $\{\lambda_{j}\}$, but in
fact this sequence is uniquely determined by $\lambda$. Indeed, there is an entire analytic
function $\Psi(z)$ with the property that $\lambda_{j}=\Psi(5^{-j}\lambda)$.  To see this, let
$\psi(z)=z(5-z)$ and $\psi_{m}(z)=\psi^{\circ m}(\frac{2}{3}5^{-m}z)$.  The sequence $\psi_{m}(z)$
consists of entire functions with $\psi_{m}(0)=0$ and $\psi_{m}'(0)=\frac{2}{3}$, so is normal with
limit $\Psi(z)$, a power series for which may be computed recursively.  It is then clear that
$\Psi(5^{-j}\lambda)=\lim_{m}\Psi(\frac{3}{2}5^{m-j}\lambda_{m})=\lim_{m}\psi_{m-j}(\lambda_{m})=\lambda_{j}$.
Moreover, we may define
\begin{equation}\label{specialfunctiondefn}
    \Upsilon(\lambda) = \frac{1}{(2-\Psi(5^{-1}\lambda))} \prod_{j=2}^{\infty} \Bigl( 1
    -\frac{\Psi(5^{-j}\lambda)}{3} \Bigr)
\end{equation}
at which point $\tau_{k}(\lambda)= \Upsilon(5^{-k}\lambda)$.  In the same way that there are
special functions associated to differential equations in Euclidean analysis, we suggest that the
functions $\Psi(\lambda)$ and $\Upsilon(\lambda)$ should be considered to be special functions in
analysis on the Sierpinski Gasket.  In terms of these functions,
\eqref{matrixformoftangentoperation} has the form
\begin{equation*}
    M_{0}(\lambda,k)
    =\begin{pmatrix}
        1 & 0& 0\\
        1-\frac{\lambda\bigl(4-\Psi(5^{-k}\lambda)\bigr)\Upsilon(5^{-k}\lambda)}{3\cdot5^{k}\Psi(5^{-k}\lambda)}
        & \frac{\lambda\bigl( 2\Upsilon(5^{-k}\lambda)+1\bigr)}{3\cdot5^{k}\Psi(5^{-k}\lambda)}
        & \frac{\lambda\bigl( 2\Upsilon(5^{-k}\lambda)- 1\bigr)}{3\cdot5^{k}\Psi(5^{-k}\lambda)}\\
        1-\frac{\lambda\bigl( 4-\Psi(5^{-k}\lambda)\bigr)\Upsilon(5^{-k}\lambda)}{3\cdot5^{k}\Psi(5^{-k}\lambda)}
        & \frac{\lambda\bigl(2\Upsilon(5^{-k}\lambda)-1 \bigr)}{3\cdot5^{k}\Psi(5^{-k}\lambda)}
        & \frac{\lambda\bigl(2\Upsilon(5^{-k}\lambda)+1 \bigr)}{3\cdot5^{k}\Psi(5^{-k}\lambda)}
        \end{pmatrix}
    \end{equation*}
\end{remark}

As a particular consequence we may compute the normal derivatives of the eigenfunctions at points
of $V_{0}$, because they are the same as the normal derivatives of the tangent functions. We expect
this observation to have applications in the construction of a resolvent for the Laplacian.

\begin{corollary}
If $(\Delta+\lambda)u=0$ and the spectral decimation formula holds with $\lambda_{m}\neq2,5,6$ for
$m>0$, then the normal derivative of $u$ at $q_{0}$ is
\begin{align*}
    \partial_{n}u(q_{0})= \Bigl( (4-\lambda_{0})u(q_{0}) -2u(q_{1})-2u(q_{2}) \Bigr)
    \frac{2\lambda\Upsilon(\lambda)}{3\lambda_{0}}.
    \end{align*}
\end{corollary}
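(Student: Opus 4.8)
The plan is to identify $\partial_n u(q_0)$ with the normal derivative of the harmonic tangent $T_w u$ for the word $w=000\dotsm$, and then to read this off from the explicit matrix $M_0(\lambda,0)$ supplied by Theorem~\ref{hartan0}. The first ingredient is the elementary observation that for a harmonic function $h$ the scale-$m$ approximant to the normal derivative at $q_0$ is \emph{independent of $m$}: writing $\bigl(h(q_0),h(q_1),h(q_2)\bigr)^T$ in the eigenbasis $(1,1,1)^T,(0,1,1)^T,(0,1,-1)^T$ of $A_0$, with respective eigenvalues $1,\frac{3}{5},\frac{1}{5}$, one computes directly that $\bigl(\frac{5}{3}\bigr)^m\bigl(2h(q_0)-h(F_0^m q_1)-h(F_0^m q_2)\bigr)=2h(q_0)-h(q_1)-h(q_2)$ for every $m$.

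Using this I would argue that $\partial_n u(q_0)=\partial_n(T_w u)(q_0)$. By definition $H_{0^m}u$ is the harmonic function agreeing with $u$ on $F_{0^m}(V_0)=\{q_0,F_0^m q_1,F_0^m q_2\}$, so in the defining limit for $\partial_n u(q_0)$ one may replace $u(q_0),u(F_0^m q_1),u(F_0^m q_2)$ by $(H_{0^m}u)(q_0),(H_{0^m}u)(F_0^m q_1),(H_{0^m}u)(F_0^m q_2)$. Applying the previous paragraph to the harmonic function $H_{0^m}u$ collapses the scale-$m$ expression to $2(H_{0^m}u)(q_0)-(H_{0^m}u)(q_1)-(H_{0^m}u)(q_2)$, and letting $m\to\infty$ --- the tangent $T_w u=\lim_m H_{0^m}u$ exists by Theorem~\ref{tangents_explicitformula}, so in particular the values at $q_0,q_1,q_2$ converge --- gives $\partial_n u(q_0)=2(T_w u)(q_0)-(T_w u)(q_1)-(T_w u)(q_2)$, which is the (scale-$0$, hence exact) normal derivative of the harmonic function $T_w u$.

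It remains to compute $T_w u\evald{V_0}$. Applying Theorem~\ref{hartan0} to $w=000\dotsm$, the hypothesis permits $m_0=0$, and taking $k_0=0$ forces $k=0$; thus the prefactor $A_{w_1}^{-1}\dotsm A_{w_k}^{-1}$ and the product $A_{w_k}(\lambda_k)\dotsm A_{w_{m_0+1}}(\lambda_{m_0+1})$ are both empty and \eqref{tangentformulawithM0} reduces to $T_w u\evald{V_0}=M_0(\lambda,0)\,u\evald{V_0}$, where $\tau_0(\lambda)=\Upsilon(\lambda)$. Feeding the three entries of $M_0(\lambda,0)\,u\evald{V_0}$ from \eqref{matrixformoftangentoperation} into $2(T_w u)(q_0)-(T_w u)(q_1)-(T_w u)(q_2)$, the $u(q_0)$ contributions cancel down to $\frac{2(4-\lambda_0)\lambda\Upsilon(\lambda)}{3\lambda_0}u(q_0)$ (the first column of $M_0(\lambda,0)$ being $1,\,1-\frac{(4-\lambda_0)\lambda\Upsilon(\lambda)}{3\lambda_0},\,1-\frac{(4-\lambda_0)\lambda\Upsilon(\lambda)}{3\lambda_0}$), while the coefficient of $u(q_1)$ and of $u(q_2)$ in $-(T_w u)(q_1)-(T_w u)(q_2)$ is $-\frac{4\lambda\Upsilon(\lambda)}{3\lambda_0}$ in each case; collecting terms yields exactly $\bigl((4-\lambda_0)u(q_0)-2u(q_1)-2u(q_2)\bigr)\frac{2\lambda\Upsilon(\lambda)}{3\lambda_0}$.

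The one step that needs genuine care is the reduction in the second paragraph --- that the normal derivative of the eigenfunction equals that of its harmonic tangent --- which rests on the exact constancy in $m$ of the harmonic normal-derivative approximant; once that is in hand the remainder is a brief linear-algebra computation with the explicit matrix of Theorem~\ref{hartan0}.
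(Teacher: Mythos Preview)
Your argument is correct and follows precisely the approach the paper indicates: the paper merely asserts, in the sentence preceding the corollary, that the normal derivative of $u$ at $q_{0}$ coincides with that of its harmonic tangent, and then leaves the reader to extract the result from $M_{0}(\lambda,0)$. You have supplied exactly those two missing details --- the constancy-in-$m$ of the harmonic normal-derivative approximant (hence $\partial_{n}u(q_{0})=\partial_{n}(T_{w}u)(q_{0})$), and the linear-algebra computation from~\eqref{matrixformoftangentoperation} with $k=0$ --- and both are carried out correctly.
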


Excluding the values $2$,$5$ and $6$ from the sequence $\{\lambda_{m}\}$ in
Theorems~\ref{tangents_explicitformula}~and~\ref{hartan0} is necessary because they occur precisely
in the Dirichlet case, where the boundary data vanishes and cannot be used to determine the
tangent. Nonetheless, Theorem~\ref{hartan0} may be applied to find tangents to Dirichlet
eigenfunctions in a simple fashion. The reason is that the description of the Dirichlet
eigenfunctions given in Theorem~\ref{intro_nonDirefns} ensures that we need only compute the
harmonic tangents of the functions in Figure~\ref{twoandfiveseries} and the left of
Figure~\ref{sixseries}.  All harmonic tangents to Dirichlet eigenfunctions are then obtained from
these by scaling and taking suitable linear combinations.

For the basic element used to construct the 6-series (Figure~\ref{sixseries}) we can directly apply
Theorem~\ref{hartan0} with $\lambda_1=6$ and $\lambda_{2}=3$. For example, if the top vertex in
Figure~\ref{sixseries} is $q_{0}$ and $w = 0\dotsm$ we have
\begin{align*}
    T_{w} u = \frac{\lambda}{9}
    \begin{pmatrix}0\\1\\-1\end{pmatrix}.
\end{align*}

To calculate the harmonic tangents of the basic element of the $2$-series (on the left in
Figure~\ref{twoandfiveseries}) we apply Theorem~\ref{hartan0} to the function shown at left in
Figure~\ref{piecesforharmonictangents}, starting the spectral decimation at each of the values
$\lambda_1 = \frac{5\pm\sqrt{17}}{2}$.  For the $5$-series there are two basic elements  (shown at
right in Figure~\ref{twoandfiveseries}) and we proceed by calculating harmonic tangents for the
initial configurations shown in the center and on the right of
Figure~\ref{piecesforharmonictangents}, starting with $\lambda_1 = \frac{5 \pm \sqrt{5}}{2}$. The
harmonic tangents of all $2$ and $5$-series eigenfunctions then coincide with scaled and rotated
copies of these pieces and their negatives, assembled in the obvious manner.

\begin{figure}[h!]
\centerline{{ \small
\begin{picture}(105.6,90)(0,0)
\setlength{\unitlength}{.23pt} \Spic{2}{32}{0}{0} \put(382,-40){$1$} \put(-18,-40){$0$}
\put(186,330){$1$}
\end{picture}
\begin{picture}(105.6,90)(0,0)
\setlength{\unitlength}{.23pt} \Spic{2}{32}{0}{0} \put(382,-40){$1$} \put(-18,-40){$0$}
\put(175,330){$-1$}
\end{picture}
\begin{picture}(105.6,90)(0,0)
\setlength{\unitlength}{.23pt} \Spic{2}{32}{0}{0} \put(382,-40){$0$} \put(-18,-40){$0$}
\put(186,330){$1$}
\end{picture}
}}
\caption{Computing harmonic tangents of $2$ and $5$-series
eigenfunctions.}\label{piecesforharmonictangents}
\end{figure}
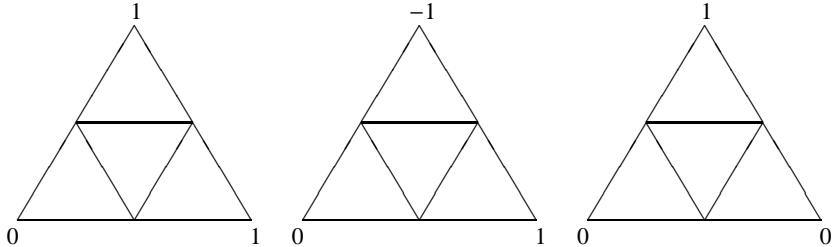

\providecommand{\bysame}{\leavevmode\hbox to3em{\hrulefill}\thinspace}
\providecommand{\MR}{\relax\ifhmode\unskip\space\fi MR }
\providecommand{\MRhref}[2]{%
  \href{http://www.ams.org/mathscinet-getitem?mr=#1}{#2}
} \providecommand{\href}[2]{#2}

\end{document}